\documentclass[graybox]{svmult}
\usepackage{type1cm}        
\usepackage{makeidx}         
\usepackage{graphicx}        
\usepackage{multicol}        
\usepackage[bottom]{footmisc}
\usepackage{mathtools}
\usepackage{xcolor}
\usepackage{newtxtext}       %
\usepackage[varvw]{newtxmath}       
\makeindex           
\usepackage{amsthm}
\usepackage[pdftex,
            colorlinks=true,
            linkcolor=black,
            citecolor=green,
            filecolor=black,
            urlcolor=green]{hyperref}
\begin{document}

\title*{Polynomially Solvable Cases of the Quadratic Binary Optimization Problem with a Fixed Cardinality Constraint}
\author{Thi Thanh Tu Le\orcidID{0009-0001-8161-4535}
and\\ Thorsten Koch\orcidID{0000-0002-1967-0077}} 
\titlerunning{Polynomial-time solvability of the FCBQP} 
\institute{Thi Thanh Tu Le \at Software and Algorithms for Discrete Optimization, Technische Universität Berlin, Germany; and  Applied Optimization Department, Zuse Institute Berlin, Germany.\\ \email{thi.le@zib.de}
\and Thorsten Koch \at Chair of Software and Algorithms for Discrete Optimization, Technische Universität Berlin, Germany; and Applied Optimization Department, Zuse Institute Berlin, Germany.\\ \email{koch@zib.de}}
\maketitle
\abstract{Quadratic Unconstrained Binary Optimization (QUBO) problems and their variant with a fixed cardinality constraint are both well-known NP-hard problems. While certain structural properties of the cost matrix can make some QUBO instances solvable in polynomial time, this advantage typically disappears once a cardinality constraint is added. In this work, we investigate specific structural conditions on the cost matrix under which the constrained version--quadratic binary optimization with a fixed cardinality--remains tractable.}



\section{Introduction}
The \emph{Quadratic Unconstrained Binary Optimization} (QUBO) problem can be stated as
\begin{equation} \label{eq:QUBO}
    \min_{x \in \{0,1\}^n}x^\top Qx + c^\top x,
\end{equation} 
where $Q = (q_{ij})_{1\leq i,j\leq n} \in \mathbb{R}^{n \times n}$ is a cost matrix and $c \in \mathbb{R}^n$. 
QUBOs are related to well-known combinatorial optimization problems (COPs), including the Ising model, Max-Cut, set partitioning, set packing, and quadratic assignment problems. 
The QUBO problem is NP-hard in general, as it is equivalent to the NP-hard Ising model \cite{barahona1982computational}. 

Several classic COPs that can be formulated as QUBOs, such as the traveling salesperson problem, Sudoku, and the quadratic assignment problem, incorporate cardinality constraints implicitly. In other cases, a fixed cardinality constraint, of the type $\sum_{i=1}^nx_i=k$, is explicitly added to QUBO models for certain applications -- for example,  the maximal dispersion problem or the fixed cardinality stable set. 
In the latter case, although the number of feasible solutions reduces from $2^n$ to $\binom{n}{k}$, the problem remains NP-hard, as it generalizes the decision problem of finding the maximum clique. 

Certain structural conditions on the cost matrix can lead to polynomial-time solvability of QUBOs, though this tractability does not extend to its fixed-cardinality variant. For example, in the submodular case, where the cost matrix $Q$ has non-positive off-diagonal elements, Problem \eqref{eq:QUBO} can be solved in polynomial time by reducing it to a minimum $s$-$t$ cut (see \cite[Proposition 7]{caprara2008}). However, the constrained version is equivalent to the NP-hard minimum $s$-$t$ cut problem with fixed cardinality shores (see \cite[Theorem 4.3]{chen2016size}).

In this paper, we investigate specific structural conditions on the cost matrix under which the quadratic binary program with a fixed cardinality constraint can be solved in polynomial time. The problem formulation and easily verifiable tractable cases are mentioned in Section 2. Section 3 continues with linear conditions on the cost matrix, while Section 4 explores rank-based properties of the cost matrix that lead to polynomial-time solvability. Finally, in Section 5, we summarize our contributions and perspectives.
\section{Formulation and simple polynomial-time cases} \label{sec:chapter_2}
The \emph{Fixed Cardinality Constrained Binary Quadratic Program} (FCBQP) is defined as
\begin{align*} 
  \min \quad \biggl\{ x^\top Qx + c^\top x \;\bigg|\; \sum_{i=1}^n x_i = k,\ x \in \{0,1\}^n \biggr\},
\end{align*}
where $Q = (q_{ij})_{1 \leq i,j \leq n} \in \mathbb{R}^{n \times n}$ is the cost matrix, $c \in \mathbb{R}^n$ is an $n$-dimensional vector, $k$ is an integer, and $ 1\leq k \leq n.$ We refer to this problem as FCBQP($Q,c, k$).

One can easily verify that certain simple structures of the cost matrix allow FCBQPs to be solved in polynomial time. Some of these structures are listed below:
 \begin{itemize}
     \item  If all entries of $Q$ are equal, the FCBQP($Q,c, k$) can be solved in $\mathcal{O}(n)$ time. Note that selecting $k$ smallest elements from an unsorted array can be done in $\mathcal{O}(n)$ time using the Median of Medians algorithm.
     \item If $Q$ is diagonal, i.e., $q_{ij} =0$ for all $i \neq j$  (including the case $Q$ is a zero matrix), then the FCBQP($Q,c, k$) (reducing to a linear program) can be solved in $\mathcal{O}(n)$ time. 
 \end{itemize}
Structural properties of the cost matrix that result in the tractability of QUBOs are discussed in Section 3.3 of \cite{punnen2022quadratic}. Inspired by their research ideas, we also consider these restrictions, but in the context of FCBQPs. 
\section{Linear restrictions on the cost matrices}
 A matrix $Q$ is said to be a \textit{sum matrix} (as defined in \cite[Section 3.3.1]{punnen2022quadratic}) if there exist vectors $a,b \in \mathbb{R}^n$ such that $q_{ij} = a_i + b_j$ for all $1 \leq i, j \leq n$.
 
It is shown in \cite[Theorem 3.12]{punnen2022quadratic} that QUBOs can be solved in $\mathcal{O}(n^2)$ time when the cost matrix $Q$ is a sum matrix. By adapting the proof technique and incorporating the fixed cardinality constraint into the objective function, we show that the complexity reduces to $\mathcal{O}(n)$ when this constraint is imposed.
 \begin{theorem}
     If $Q$ is a sum matrix, the FCBQP($Q,c, k$) can be solved in $\mathcal{O}(n)$ time.
     \label{theorem 1}
 \end{theorem}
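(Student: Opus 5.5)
We plan to exploit the fixed cardinality to collapse the quadratic objective into a linear one. For a sum matrix $q_{ij}=a_i+b_j$ and any $x\in\{0,1\}^n$ we have
\begin{equation*}
x^\top Q x=\sum_{i=1}^n\sum_{j=1}^n (a_i+b_j)x_ix_j=\Bigl(\sum_{i=1}^n a_ix_i\Bigr)\Bigl(\sum_{j=1}^n x_j\Bigr)+\Bigl(\sum_{i=1}^n x_i\Bigr)\Bigl(\sum_{j=1}^n b_jx_j\Bigr).
\end{equation*}
On the feasible set $\sum_{i=1}^n x_i=k$, so this becomes $k\,a^\top x+k\,b^\top x$. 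Hence every feasible $x$ satisfies
\begin{equation*}
x^\top Qx+c^\top x=\bigl(k(a+b)+c\bigr)^\top x=:d^\top x .
\end{equation*}
This identity is the heart of the argument: it replaces FCBQP($Q,c,k$) by the linear problem $\min\{d^\top x \mid \sum_{i=1}^n x_i=k,\ x\in\{0,1\}^n\}$, which has the same feasible set and the same objective values on it. In particular, no use of $x_i^2=x_i$ is needed beyond the factorization.

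The linear problem is solved by setting $x_i=1$ exactly for the indices of $k$ smallest entries of $d$. To see optimality, note that any feasible $x$ selects $k$ indices and its cost is the sum of the corresponding $d_i$. That sum is at least the sum of the $k$ smallest values, so a simple exchange argument suffices. This is the same observation used in Section~\ref{sec:chapter_2} for diagonal $Q$.

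For the running time, computing $d_i=k(a_i+b_i)+c_i$ takes $\mathcal{O}(n)$. Selecting the $k$-th smallest value of $d$ with the Median of Medians algorithm also takes $\mathcal{O}(n)$. A final linear pass then picks all entries strictly smaller than this value and fills the remaining slots with entries equal to it, which handles ties.

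One modelling point needs care. The theorem is stated for a sum matrix $Q$ given as an input; if only the matrix were given, even reading it would cost $\Omega(n^2)$. We will therefore assume $Q$ is presented through the vectors $a,b$, or equivalently that they are available, as in \cite{punnen2022quadratic}. With that assumption the argument has no substantial obstacle. The only steps needing care are the factorization and the tie handling in the selection step.
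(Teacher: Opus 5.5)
Your proposal is correct and follows the paper's proof: factor $x^\top Qx=\bigl(\sum_i x_i\bigr)\bigl(\sum_i (a_i+b_i)x_i\bigr)$, substitute the cardinality constraint to get the linear costs $k(a_i+b_i)+c_i$, and select the $k$ smallest in $\mathcal{O}(n)$ time. Your input-model caveat is unnecessary, since $a_i+b_i=q_{ii}$: the costs $kq_{ii}+c_i$ can be read off the diagonal of $Q$ in $\mathcal{O}(n)$ time without recovering $a$ and $b$.
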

 \begin{proof}
     With the assumption that $q_{ij} = a_i + b_j$ for all $1 \leq i, j \leq n$, we obtain:
     \begin{align*}
        x^\top Q x + c^\top x = \sum_{i=1}^n\sum_{j=1}^nq_{ij}x_ix_j + \sum_{i=1}^nc_ix_i 
        = \left( \sum_{i=1}^n (a_i + b_i) x_i \right) \left( \sum_{i=1}^n x_i \right) + \sum_{i=1}^nc_ix_i.
     \end{align*}
     Using the cardinality constraint $\sum_{i=1}^nx_i =k$, the problem becomes:
     \begin{equation*}
    \min \left\{ \sum_{i=1}^n \left( k(a_i + b_i) + c_i \right) x_i \ \middle| \ \sum_{i=1}^n x_i = k,\ x \in \{0,1\}^n \right\}.
\end{equation*}
 The optimal solution is constructed as follows: we set $x_i = 1$ for the $k$ variables corresponding to the smallest values of $(k(a_i+b_i) +c_i) $ in $n$ variables, and set $x_i = 0$ for the remaining ones. This can be done in $\mathcal{O}(n)$ time.  
 \end{proof}
 A matrix $Q$ is said to be a \emph{weak sum matrix} if there exist vectors $a,b  \in \mathbb{R}^n$ such that $q_{ij}=a_i+b_j$ for all $1 \leq i,j \leq  n$ and $i \neq j.$  This case can be reduced to the sum matrix case \cite[Corollary 3.3]{punnen2022quadratic}. Combining with Theorem \ref{theorem 1}, we obtain the following result.
 \begin{corollary}
      The FCBQP($Q,c, k$) can be solved in $\mathcal{O}(n)$ time if $Q$ is a weak sum matrix. 
 \end{corollary}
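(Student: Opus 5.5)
The plan is to reduce the weak sum case to Theorem \ref{theorem 1} by changing only the diagonal of $Q$ and absorbing the difference into the linear term. The reduction rests on the fact that $x_i^2 = x_i$ for $x \in \{0,1\}^n$.

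Suppose $q_{ij} = a_i + b_j$ for all $i \neq j$. Define $Q'$ by $q'_{ij} = a_i + b_j$ for all $i,j$, including $i=j$. Then $Q'$ is a sum matrix, and $Q - Q' = D$ is diagonal with entries $d_i = q_{ii} - a_i - b_i$. For every binary $x$ we have
\[
x^\top Q x + c^\top x = x^\top Q' x + \sum_{i=1}^n d_i x_i^2 + c^\top x = x^\top Q' x + (c+d)^\top x .
\]
The feasible set $\{x \in \{0,1\}^n : \sum_i x_i = k\}$ is unchanged. Hence FCBQP($Q,c,k$) and FCBQP($Q',c+d,k$) have the same objective on every feasible point, and therefore the same optimal solutions. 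This is exactly the reduction cited from \cite[Corollary 3.3]{punnen2022quadratic}, so it may be invoked directly; the computation above is just a check that it does not interact with the cardinality constraint.

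Next I account for the cost. I assume the vectors $a,b$ are given as the data describing the weak sum structure, together with the diagonal entries $q_{ii}$. Computing $d$ and $c+d$ then takes $\mathcal{O}(n)$ time. Applying Theorem \ref{theorem 1} to FCBQP($Q',c+d,k$) costs another $\mathcal{O}(n)$. In that step we form the coefficients $k(a_i+b_i)+c_i+d_i$, which simplify to $(k-1)(a_i+b_i)+q_{ii}+c_i$, and select the $k$ smallest by linear-time selection. The total running time is $\mathcal{O}(n)$.

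The only real obstacle is the representational assumption. If $Q$ were given merely as an $n\times n$ matrix, reading it or recovering $a,b$ would already take $\Omega(n^2)$ time. I would therefore state explicitly that, as in Theorem \ref{theorem 1}, the input consists of $a$, $b$, the diagonal of $Q$, $c$ and $k$. With that convention the linear bound follows immediately.
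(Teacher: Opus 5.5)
Your proposal is correct and follows the paper's route: absorb the diagonal discrepancy $q_{ii}-a_i-b_i$ into the linear term via $x_i^2=x_i$ (the reduction of \cite[Corollary 3.3]{punnen2022quadratic}), then apply Theorem~\ref{theorem 1}. Your remark that the $\mathcal{O}(n)$ bound assumes $a$, $b$, the diagonal of $Q$, $c$ and $k$ are given as input is a sensible clarification that the paper leaves implicit.
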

The \emph{Quadratic Assignment Problem} (QAP) is a well-studied COP. Let $D=(d_{ij})_{1 \leq i, j \leq n}$ denote the distance matrix between locations, and $F=(f_{ij})_{1 \leq i, j \leq n}$ the flow (or weight) matrix between facilities.  The objective is to find a bijection from facilities to locations that minimizes the overall cost, defined as the sum of distance–flow products:
\begin{equation} \label{eq:Obj_QAP}
\quad  Z_{\pi}(D, F) := \sum_{i=1}^{n} \sum_{j=1}^{n} d_{\pi(i)\pi(j)} f_{ij},
\end{equation} 
where $\pi \in S_n$ ranges over all permutations of $\{ 1, 2, \dots, n \}$. We denote the QAP  with an objective function \eqref{eq:Obj_QAP} as QAP($D,F$). The relationship between polynomially solvable special cases of QAPs and QUBOs is discussed in \cite[Section 3.3.1]{punnen2022quadratic}. Here, we highlight the equivalence between QAPs in these special cases and FCBQPs.

A matrix $A = (a_{ij})_{1 \leq i, j \leq n}$ is called a \emph{symmetric 0-1 product matrix} 
if there exists a vector $\alpha \in \{0,1\}^n$ such that $a_{ij} = \alpha_i \alpha_j \text{ for } 1 \leq i, j \leq n$.
Consider $D^{(\lambda)} = (d_{ij}^{(\lambda)})_{1 \leq i, j \leq n}$, a symmetric 0-1 product matrix defined by $d_{ij}^{(\lambda)} = x_i x_j$ for $1 \leq i, j \leq n$, where $x = (x_1 \ x_2  \dots  x_n)^\top  \in \{0,1\}^n$, with $x_i = 1$ for $1 \leq i \leq \lambda$ and $x_i = 0$ for the remaining indices. In this case:
$Z_\pi(D^{(\lambda)},F) = \sum_{i=1}^n \sum_{j=1}^nx_{\pi(i)}x_{\pi(j)}f_{ij}  = x_{\pi}^\top Fx_\pi$.
Hence, QAP($D^{(\lambda)}, F$) is equivalent to the following FCBQP:
  \begin{equation}  \label{eq:QAP_FCBQP}
    \min \left\{ x^\top F x \;\middle|\;  \sum_{i=1}^n x_i = \lambda, \ x \in \{0,1\}^n \right\}.
\end{equation}
The next theorem arises as a consequence of the above  QAP–FCBQP equivalence.
\begin{theorem} \label{QAP_FCBQP_theorem}
    The following cases of FCBQPs are polynomially solvable.
    \begin{itemize}
        \item[(i)] Let $q$ be a fixed natural number and let $P = (p_{ij})_{1 \leq i,j \leq q}$ be a matrix such that $p_{ii}+p_{jj} \leq 2p_{ij}$ for all $1\leq i, j \leq q.$ Then the FCBQP($Q,0,k$) can be solved in $\mathcal{O}(q^2q!+n \log n)$ time if $Q$ is a block matrix with block pattern $P$ (as in \cite{cela2015well}), i.e., (a) the set $\{1,\dots, n\}$ can be partitioned into $q$ (possibly empty) intervals $I_1, \dots, I_q$ such that for each $1 \leq t \leq q-1,$ all elements in $I_t$ are smaller than all elements in $I_{t+1},$ and (b) for all indices $1 \leq i,j \leq n,$ if  $i \in I_k$ and $j \in I_l,$ then $q_{ij}=p_{kl}.$
        \item[(ii)] \ The FCBQP($Q,0,k$) can be solved in $\mathcal{O}(n)$ time if $Q$ is a Toeplitz matrix 
        
        generated by a benevolent function (as defined in \cite{burkard1998quadratic}), that is, there exists a function
        $f:\{-n+1, \dots, n-1\} \to \mathbb{R}$ such that $q_{ij}=f(i-j),$ for all  $1 \leq i,j \leq n,$ and $f$ satisfies (a) $f(-i)=f(i)$ for all $1 \leq i \leq n-1,$ (b) $f(i) \leq f(i+1)$ for all $1 \leq i \leq \lfloor \frac{n}{2} \rfloor -1,$ and (c) $f(i) \leq f(n-i)$ for all $1 \leq i \leq \lceil \frac{n}{2} \rceil -1.$ 
    \end{itemize}
\end{theorem}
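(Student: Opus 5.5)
The plan is to run the QAP–FCBQP equivalence \eqref{eq:QAP_FCBQP} in reverse. Given an instance FCBQP($Q,0,k$), set $F:=Q$ and $D:=D^{(k)}$, the symmetric 0-1 product matrix with $\lambda=k$ leading ones. For a permutation $\pi$, the vector $x_\pi$ (with entries $x_{\pi(i)}$) is a 0-1 vector with exactly $k$ ones, so $Z_\pi(D^{(k)},Q)=x_\pi^\top Q x_\pi$. Conversely, every $y\in\{0,1\}^n$ with $\sum_i y_i=k$ equals $x_\pi$ for some $\pi$, and such a $\pi$ can be built in $\mathcal{O}(n)$ time by sending the positions of the ones of $y$ onto $\{1,\dots,k\}$. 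Hence
\[
\min_\pi Z_\pi(D^{(k)},Q)=\min\Bigl\{x^\top Qx \;\Bigm|\; \textstyle\sum_i x_i=k,\ x\in\{0,1\}^n\Bigr\},
\]
and an optimal permutation yields an optimal FCBQP solution with $\mathcal{O}(n)$ overhead. It therefore suffices to show that QAP($D^{(k)},Q$) is polynomially solvable, within the stated bounds, under the hypotheses of (i) and (ii).

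For (i), I would use the result of \cite{cela2015well} on QAPs in which one matrix is a block matrix with a block pattern $P$ satisfying $p_{ii}+p_{jj}\le 2p_{ij}$ for a fixed number $q$ of blocks, and the other matrix is a symmetric 0-1 product matrix. The key step is checking that the roles match. The flow matrix $F=Q$ must be the block matrix with pattern $P$, and $D^{(k)}$ must be the product matrix. If the theorem in \cite{cela2015well} is stated with the roles exchanged, I would use the identity $Z_\pi(D,F)=Z_{\pi^{-1}}(F,D)$, obtained by reindexing the double sum, so that the roles can be swapped freely. The running time $\mathcal{O}(q^2q!+n\log n)$ is then inherited directly. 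Intuitively, the algorithm enumerates orderings of the $q$ blocks (the $q!$ factor, with $q^2$ work each) and sorts (the $n\log n$ term). The main obstacle in this part is matching the precise hypotheses and normalisation of the cited theorem, for instance whether the product matrix must have its ones in the leading positions, which $D^{(k)}$ does, and whether $\alpha$ may be arbitrary.

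For (ii), I would invoke \cite{burkard1998quadratic}. There, a QAP whose one matrix is a symmetric Toeplitz matrix generated by a benevolent function and whose other matrix is a symmetric 0-1 product (monotone) matrix is solved by an explicit, instance-independent optimal permutation. Roughly, the $k$ ones of the product matrix are placed on a contiguous block of indices, and by properties (a)–(c) of $f$ an interval of $k$ consecutive indices minimises the sum of pairwise $f$-values. Translating back, the optimal FCBQP solution is $x_i=1$ on such an interval (e.g. $\{1,\dots,k\}$ or the centred interval prescribed there), which can be written down in $\mathcal{O}(n)$ time. Again I would apply $Z_\pi(D,F)=Z_{\pi^{-1}}(F,D)$ if the roles of the two matrices are reversed in the cited statement. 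The delicate point is verifying that the cited optimal permutation is independent of the data beyond $k$ and $n$. This is what yields $\mathcal{O}(n)$ rather than a dependence on evaluating the objective. Should that fail, computing the objective over the $n-k+1$ intervals of length $k$ with a sliding window would still run in polynomial time.
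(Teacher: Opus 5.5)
Your proposal is correct and follows the paper's proof: reduce FCBQP($Q,0,k$) to QAP($D^{(k)},Q$), then invoke \cite[Theorem 4.6]{cela2015well} for (i) and the fixed cyclic permutation of \cite[Theorem 1.9]{burkard1998quadratic} for (ii); for (i), the condition $p_{ii}+p_{jj}\le 2p_{ij}$ enters through the no-bad-ensemble criterion of \cite[Corollary 4.2]{cela2015well}. The paper spells out two details you leave implicit in (ii): $D^{(k)}$ must first be reversed to become a monotone Anti-Monge matrix, and for a Toeplitz $Q$ every window of $k$ consecutive ones has the same objective value, which justifies your choice of $\{1,\dots,k\}$ and makes the sliding-window fallback unnecessary.
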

\begin{proof} As shown in \eqref{eq:QAP_FCBQP}, the QAP$(D^{(k)}, Q)$ is equivalent to the FCBQP($Q,0,k$), where $D$ is a symmetric 0-1 product. As a result, specific tractable cases of the QAP($D^{(k)}, Q$) also apply to the FCBQP($Q,0,k$).

Case $(i).$ If the pattern matrix $P$ satisfies $p_{ii}+ p_{jj} \leq 2 p_{ij}$ for $1 \leq i,j \leq q,$ then $P$ does not allow any \emph{bad ensemble} (see \cite[Corollary 4.2]{cela2015well}), and the QAP($D^{(k)}, Q$) can be solved in $\mathcal{O}(q^2q!+ n \log n)$ time (see \cite[Theorem 4.6]{cela2015well}). 

Case $(ii).$
    The symmetric 0-1 product matrix $D^{(k)}=(d_{ij}^{(k)})_{1 \leq i,j \leq n},$ where $d_{ij}^{(k)} =x_ix_j$ for $1 \leq i,j \leq n,$ and $x_i =1$ for $1 \leq i \leq k, x_i = 0$ otherwise, can be converted to a \textit{monotone Anti-Monge matrix} (see \cite [Definition 1.1]{burkard1998quadratic}) by the permutation $\pi \in S_n,$ defined by $\pi(i) := n+1-i, \text{ for } 1 \leq i \leq n$.
    After this permutation, we have $x_i = 1$ for $n-k +1 \leq i \leq n.$
    It is proven in \cite[Theorem 1.9]{burkard1998quadratic} that QAP($A,B$), when $A$ is a monotone Anti-Monge matrix, and $B$ is a  Toeplitz matrix generated by a benevolent function, can be solved by the permutation $\pi^* \in S_n,$ defined by: 
    \begin{equation*}
    \pi^*(i) := 2i-1 \text{ for } 1 \leq i \leq \lceil n/2 \rceil, \text{ and } \pi^{*}(n+1-i):=2i \text{ for } 1 \leq i \leq \lfloor n/2 \rfloor.
    \end{equation*}
    It is easy to see that $\pi^*$ produces the odd numbers in increasing order, followed by even numbers in decreasing order. It follows that 
    \begin{equation*}
     x_{\pi^*(i)} = 1 \text{ for } \Bigl\lfloor\frac{n-k +1}{2} \Bigr \rfloor < i < \Bigl \lceil \frac{n+k}{2}\Bigr \rceil + 1 \text{ (i.e., for $k$ consecutive variables)}.   
    \end{equation*} 
    Since the permutation $\pi$ does not affect the consecutiveness of variables, and the objective value remains the same for any $k$ consecutive variables, an optimal solution can be obtained by setting $k$ consecutive variables to one and the others to zero. 
    \end{proof}
\begin{remark}
Under the same conditions on the cost matrix as Theorem \ref{QAP_FCBQP_theorem}, the complexity of FCBQPs is reduced by a factor of $n$ compared to QUBOs  (see \cite[Theorem 3.13~$(i-ii)$]{punnen2022quadratic}).
\end{remark}
A matrix $Q=(q_{ij})_{1 \leq i,j \leq n}$ is said to be the \emph{distance matrix
 of a one-dimensional point set} if there exists $n$ real numbers $\alpha_1, \alpha_2, \dots, \alpha_n$ such that $q_{ij} = |\alpha_i - \alpha_j| \text{ for all } 1\leq i,j \leq n.$ 
This special case has attracted significant attention, notably in QAPs and dispersion problems. For such instances, QUBOs are shown to be solvable in pseudo-polynomial time via their equivalence to QAPs (see \cite[Theorem 3.13 ($iii$)]{punnen2022quadratic}). We notice that adding a fixed cardinality constraint makes this solvability more apparent.
A straightforward proof--independent of QAPs--is presented below for FCBQPs.
\begin{theorem}
If $Q$ is the distance matrix of a one-dimensional point set, then the FCBQP($Q,0,k$) can be solved in $\mathcal{O}(n \log n)$ time.
\end{theorem}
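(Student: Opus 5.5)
Since the problem is a minimization with $c=0$ and $q_{ij}=|\alpha_i-\alpha_j|\ge 0$, we need to choose $k$ points of the line whose total pairwise distance $\sum_{i\in S}\sum_{j\in S}|\alpha_i-\alpha_j|$ is as small as possible. The plan is to show that some optimal $S$ consists of $k$ points that are consecutive in sorted order. Then it suffices to sort the $\alpha_i$ and scan all $n-k+1$ windows, updating the window cost in $\mathcal{O}(1)$ per shift.

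First sort, so that $\alpha_{(1)}\le\dots\le\alpha_{(n)}$, at cost $\mathcal{O}(n\log n)$; relabel the indices accordingly. For a sorted subset $s_1<\dots<s_k$ of indices, the objective equals
\begin{equation*}
2\sum_{1\le p<r\le k}(\alpha_{s_r}-\alpha_{s_p}) = 2\sum_{p=1}^{k}(2p-k-1)\,\alpha_{s_p}.
\end{equation*}
Equivalently, each gap $\alpha_{s_{p+1}}-\alpha_{s_p}$ is counted with weight $2p(k-p)>0$.

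For the exchange argument, suppose $S$ is not a contiguous block of indices. Then there is an unselected index $t$ with $s_p<t<s_{p+1}$ for some $p$. Let $C$ be the cost of $S$. Replacing $s_1$ by $t$ changes the cost by $\sum_{j\in S\setminus\{s_1\}}\big(|\alpha_t-\alpha_j|-|\alpha_{s_1}-\alpha_j|\big)$, and replacing $s_k$ by $t$ changes it by the analogous sum. I will show that these two changes cannot both be positive. Write $L=\{s_2,\dots,s_p\}$ and $R=\{s_{p+1},\dots,s_{k-1}\}$, and let $d_1=\alpha_t-\alpha_{s_1}\ge 0$ and $d_2=\alpha_{s_k}-\alpha_t\ge 0$. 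Moving $s_1$ up to $t$ decreases each distance to $R\cup\{s_k\}$ by exactly $d_1$, and increases each distance to $L$ by at most $d_1$. So the first change is at most $d_1(|L|-|R|-1)$. Symmetrically, the second change is at most $d_2(|R|-|L|-1)$. One of $|L|-|R|-1$ and $|R|-|L|-1$ is at most $-1$, so one of the two swaps does not increase the cost. Since each swap does not increase the cost, every swap makes progress toward contiguity, and repeating it terminates. The clean measure of progress is $s_k-s_1$, the index span of the block: each swap raises $s_1$ or lowers $s_k$, so the span strictly decreases and can only fall to $k-1$. Hence some optimal set is a contiguous window $\{i,\dots,i+k-1\}$ in sorted order. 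This exchange step is the only nontrivial part of the argument.

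Finally, define $W(i)=\sum_{p=0}^{k-1}(2p-k+1)\,\alpha_{(i+p)}$. Compute $W(1)$ directly in $\mathcal{O}(k)$. To shift the window, use
\begin{equation*}
W(i+1)-W(i) = (k-1)\big(\alpha_{(i)}+\alpha_{(i+k)}\big) - 2\sum_{p=i+1}^{i+k-1}\alpha_{(p)},
\end{equation*}
and maintain the window sum with prefix sums, so each shift costs $\mathcal{O}(1)$. This identity follows directly from the linear form of the cost above. Take the minimizing window and set $x_i=1$ for its original indices. The total running time is dominated by sorting, giving $\mathcal{O}(n\log n)$.
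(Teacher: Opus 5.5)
Your proposal is correct and follows the same route as the paper: sort the points, restrict to the $n-k+1$ windows of consecutive sorted points, and sweep them with the same $\mathcal{O}(1)$ prefix-sum update. The one difference is that your exchange argument, using the index span $s_k-s_1$ as a strictly decreasing potential, actually proves the contiguity claim that the paper only asserts informally (``the total pairwise distance increases as the selected points become more spread out''); it also correctly concludes that \emph{some} optimal set is a window, which is the accurate statement when the $\alpha_i$ have ties.
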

\begin{proof}
The problem FCBQP($Q,0,k$) is equivalent to finding a subset of $k$ points on a line that minimizes the total pairwise distance. Since the total pairwise distance increases as the selected points become more spread out, the optimal solution must consist of $k$ consecutive points. The problem can be solved through the following steps:
\begin{itemize}
    \item Sort the $n$ points $\alpha_1, \alpha_2, \dots, \alpha_n$ on the line in ascending order. Denote the sorted results as $\alpha_1^\ast, \alpha_2^\ast, \dots, \alpha_n^\ast$. This takes $\mathcal{O}(n \log n)$ time (e.g., using Merge Sort). 
    \item For $1 \leq i \leq n-k+1$, compute the total pairwise distance of $k$ consecutive points, starting from point $\alpha_i^\ast$, denoted by $D_i$. We have $D_i = \sum_{j=0}^{k-1}(2j -k+1) \alpha^\ast_{i+j}$. 
The initial value $D_1$ can be computed in $\mathcal{O}(k)$ time. Define $S_t \coloneqq \sum_{r=1}^t \alpha_r^\ast$ for $t = 1,\dots,n$, which can be computed in $\mathcal{O}(n)$ time.
Since
$D_{i+1} = D_i +(k-1)(\alpha^\ast_i + \alpha^\ast_{i+k}) - 2(S_{i+k-1} - S_i)$, each value $D_{i+1}$ can be updated from $D_i$ in constant time. Hence, all values $D_i$ can be computed  in $\mathcal{O}(n)$ time. The minimum value $D_i^\ast$ among these total pairwise distances corresponds to half the optimal value for the FCBQP($Q,0,k$), which can be found in $\mathcal{O}(n)$ time.
\end{itemize}
Therefore, the overall runtime is $\mathcal{O}(n \log n).$
\end{proof}
\section{Rank-based properties of the cost matrices} 
When $Q$ is a symmetric rank-one matrix, QUBOs are NP-hard in general (see \cite[Theorem 3.10]{punnen2022quadratic}), except when the vector $c$ in \eqref{eq:QUBO} is the zero vector (see \cite[Section 3.3.2.1]{punnen2022quadratic}).   The following theorem proposes our new result for this case in FCBQPs. 
\begin{theorem} 
 If $Q$ is a symmetric rank-one matrix whose entries all have the same sign, i.e., $Q = uu^\top$ or $Q=-uu^\top$ for some $u \in \mathbb{R}^n$ with $u_i \geq 0$ for all $i = 1, \dots, n$, then the FCBQP($Q,0,k$) can be solved in polynomial time.
 \end{theorem}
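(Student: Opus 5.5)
Since $c=0$, the objective collapses to a function of a single linear form. For $Q=uu^\top$ we have $x^\top Q x=(u^\top x)^2$, and for $Q=-uu^\top$ we have $x^\top Qx=-(u^\top x)^2$. Because $u_i\ge 0$ and $x\in\{0,1\}^n$, the quantity $s(x):=u^\top x=\sum_{i\in S}u_i$ is nonnegative, where $S=\{i: x_i=1\}$ and $|S|=k$. On $[0,\infty)$ the map $t\mapsto t^2$ is monotonically increasing. So the plan is to reduce both cases to optimizing the linear function $s(x)$ over the cardinality constraint, and then solve that by selection, exactly as in the proof of Theorem~\ref{theorem 1}.

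\textbf{Case $Q=uu^\top$.} Minimizing $(u^\top x)^2$ is equivalent to minimizing $u^\top x$ subject to $\sum_i x_i=k$, $x\in\{0,1\}^n$. This is precisely the situation of a diagonal/linear FCBQP treated in Section~\ref{sec:chapter_2}. An optimal solution sets $x_i=1$ for the indices of the $k$ smallest entries of $u$. These indices are found in $\mathcal{O}(n)$ time with the Median of Medians selection algorithm.

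\textbf{Case $Q=-uu^\top$.} Minimizing $-(u^\top x)^2$ is equivalent to maximizing $(u^\top x)^2$. Since $u^\top x\ge 0$, this is in turn equivalent to maximizing $u^\top x$. An optimal solution takes the $k$ largest entries of $u$, again in $\mathcal{O}(n)$ time.

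In both cases we obtain an $\mathcal{O}(n)$ algorithm, which in particular is polynomial.

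The only delicate point is justifying the passage from the square to the linear form. This relies on the sign hypothesis $u_i\ge 0$, which guarantees $u^\top x\ge 0$ for every feasible $x$, so squaring is order-preserving on the feasible values. Without it, $u^\top x$ could take both signs and minimizing $(u^\top x)^2$ would become a subset-sum type problem, which explains the restriction in the statement. I would also briefly remark why the argument needs no symmetry beyond $Q=\pm uu^\top$: the representation $x^\top Qx=\pm(u^\top x)^2$ is exact. I would also note that a matrix with all entries of the same sign that is symmetric of rank one necessarily has this form, since its diagonal entries $\pm u_i^2$ force a common sign.
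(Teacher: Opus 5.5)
Your proof is correct and takes the same approach as the paper. You rewrite $x^\top Qx=\pm(u^\top x)^2$ and, since $u^\top x\ge 0$, reduce to picking the $k$ smallest (for $uu^\top$) or $k$ largest (for $-uu^\top$) entries of $u$; you are more explicit than the paper, which leaves the role of the sign hypothesis implicit and covers the $-uu^\top$ case only with ``similar arguments.''
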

 \begin{proof}
 First, we consider the case $Q=uu^\top$ for some vectors $u \in \mathbb{R}^n$. The objective function of FCBQP$(Q,0,k)$ becomes:
$x^\top Qx = (u^\top x)^\top(u^\top x) = \bigl( \sum_{i=1}^nu_ix_i \bigl)^2$. 
Since the solution $x$ is $k$-sparse, minimization is achieved by setting $k$ variables with the smallest values of $u_i$ to one and the rest to zero. 

The case $Q = -u u^\top$ follows by similar arguments.
\end{proof}
\begin{remark}
Note that the optimal solutions of FCBQPs do not change when we add up a constant to all entries of $Q$. Therefore, we can always make all entries of $Q$ have the same sign (either all nonnegative or all nonpositive) by adding an appropriate constant.
\end{remark}
It is proven in \cite{allemand2001polynomial} that QUBOs with a maximization objective can be solved in polynomial time when the cost matrix is symmetric, positive semidefinite, and of fixed rank. Using the same strategy, we obtain a similar result for FCBQPs, as stated below.
\begin{theorem}
    If $Q$ is a symmetric, negative semidefinite matrix of a fixed rank $r$, i.e., there exists an $r \times n$ matrix $V$ such that $Q = -V^\top V,$ then the FCBQP($Q,0,k$)
    can be solved in polynomial time. 
\end{theorem}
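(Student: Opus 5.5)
The plan is to follow the zonotope strategy of \cite{allemand2001polynomial}. Writing $Q=-V^\top V$, the objective becomes
\begin{equation*}
x^\top Q x = -\|Vx\|_2^2 ,
\end{equation*}
so FCBQP($Q,0,k$) is the problem of maximizing the convex function $g(z)=\|z\|_2^2$ over the finite point set $Z_k=\{Vx \mid x\in\{0,1\}^n,\ \sum_i x_i=k\}\subset\mathbb{R}^r$. A convex function attains its maximum over $\operatorname{conv}(Z_k)$ at a vertex. It therefore suffices to enumerate a polynomial-size set of candidate points $x$ whose images contain all vertices of $\operatorname{conv}(Z_k)$. We then evaluate $-\|Vx\|^2$ at each candidate and keep the best.

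To generate the candidates, we characterize the vertices through linear objectives. A point $Vx^*$ is a vertex of $\operatorname{conv}(Z_k)$ exactly when there is a direction $w\in\mathbb{R}^r$ for which $x^*$ is the unique maximizer of $w^\top Vx$ over the feasible set. Set $c(w)=V^\top w\in\mathbb{R}^n$. The maximizer of the linear objective $c(w)^\top x$ subject to $\sum_i x_i=k$ is obtained, as in the proof of Theorem~\ref{theorem 1}, by selecting the $k$ indices with the largest values $c_i(w)=v_i^\top w$, where $v_i$ is the $i$-th column of $V$. Hence the optimal $x$ depends only on which $k$ columns rank highest in direction $w$. That ranking can change only when $w$ crosses one of the hyperplanes
\begin{equation*}
H_{ij}=\{w \mid (v_i-v_j)^\top w=0\},\qquad 1\le i<j\le n .
\end{equation*}

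These $\binom n2$ hyperplanes through the origin form a central arrangement in $\mathbb{R}^r$, which has $\mathcal{O}(n^{2(r-1)})$ cells. Inside each open cell the top-$k$ set is constant; on cell boundaries ties occur, but those points are not unique maximizers, so they cannot be vertices and need not be handled. It is therefore enough to pick one interior point $w$ from each cell. For that $w$ we compute the top-$k$ set by selection in $\mathcal{O}(n)$ and evaluate the objective in $\mathcal{O}(nr)$. For fixed $r$ the total running time is polynomial, roughly $\mathcal{O}(n^{2r-1})$ up to the cost of the enumeration.

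The main obstacle is enumerating one representative per cell in polynomial time. For this I would invoke a standard arrangement-enumeration algorithm, such as Edelsbrunner–O'Rourke–Seidel incremental construction, or reverse search, as used in \cite{allemand2001polynomial}. A cruder alternative with the same polynomial bound for fixed $r$ avoids this machinery. Every cell of the arrangement has a vertex-like witness determined by $r-1$ of the hyperplanes together with a normalization. We can therefore enumerate all $(r-1)$-subsets of hyperplanes and solve for the corresponding line direction. Each such direction is then perturbed symbolically with a lexicographic tie-breaking rule, which yields a top-$k$ set for every adjacent cell. The resulting $\mathcal{O}(n^{2(r-1)})$ candidate sets cover all vertices of $\operatorname{conv}(Z_k)$, and this completes the argument.
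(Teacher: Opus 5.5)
Your argument is correct in its main line, but it reaches the vertex enumeration by a different route than the paper.

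\textbf{Common start and where the routes split.} Both proofs begin the same way: the objective is $-\|Vx\|^2$, and a convex function is maximized at a vertex. The paper relaxes to $K=\{x\in[0,1]^n \mid \sum_i x_i=k\}$ and passes to $Z=V(K)$. It then asserts that the extreme points of $Z$ are among those of the zonotope $V([0,1]^n)$, so that the zonotope count of \cite{allemand2001polynomial} applies ($\mathcal{O}(n^{r-1})$ for $r\ge 3$). You instead identify the vertices of $\operatorname{conv}(Z_k)$ directly, as images of top-$k$ sets of the columns in some direction $w$. You then bound the number of such sets by the cells of the arrangement of the $\binom n2$ hyperplanes $H_{ij}$, which gives $\mathcal{O}(n^{2(r-1)})$ candidates. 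This also avoids needing integrality of $K$.

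\textbf{What each route buys.} Your bound is weaker than the one the paper states, but your argument is self-contained and sound. The paper's inclusion is false in general: for $r=1$, $V=(1\ 2\ 3)$, $k=1$ one gets $Z=[1,3]$, while $V([0,1]^3)=[0,6]$. Vertices of $Z$ come from $k$-sets of the columns, i.e.\ sets cut off by affine hyperplanes, not by hyperplanes through the origin. A repaired zonotope argument would use the generators $(v_i,1)\in\mathbb{R}^{r+1}$ and slice at height $k$. For pairwise distinct columns that gives $\mathcal{O}(n^{r})$ candidates.

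\textbf{Details to tighten.}
\begin{enumerate}
\item If $v_i=v_j$ for some $i\neq j$, then $H_{ij}=\mathbb{R}^r$ and there are no open cells at all. Drop those pairs, and note that ties between identical columns do not change $Vx$. Relatedly, uniqueness in your vertex characterization should be required of $Vx^*$, not of $x^*$.
\item Your stated reason for ignoring cell boundaries is not right. A direction $w\in H_{ij}$ with $i$ and $j$ both among the top $k$ still has a unique maximizer. The correct reason is that the set of directions exposing a given vertex is open and nonempty, so it meets an open cell.
\item The ``cruder alternative'' does not work as written. A single lexicographic perturbation of a ray reaches only one of the cells around it. Moreover, when the $v_i-v_j$ do not span $\mathbb{R}^r$, the cells have no extreme rays. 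Rely on the cited arrangement-enumeration algorithm instead.
\end{enumerate}
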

\begin{proof}
    Solving the FCBQP($Q,0,k$) in this case is equivalent to solving:
    \begin{equation} \label{eq:fixed_rank}
    \begin{aligned}
     \max_{\sum_{i=1}^n x_i =k, \ x \in \{0,1\}^n}    f(x) = x^\top (-Q)x= x^\top V^\top V x = \sum_{i=1}^r \langle v_i, x \rangle^2
    \end{aligned}
    \end{equation}
    where $v_i^\top$ is the $i$-th row of the matrix $V,$ and $\langle a,b \rangle$ denotes the inner product of vectors $a$ and $b.$ Let $K := \{x \in [0,1]^n \ | \ \sum_{i=1}^nx_i = k\}$ be the relaxation of the feasible set in \eqref{eq:fixed_rank}.  Then $K$ is the intersection of the hypercube $[0,1]^n$ with the hyperplane $\sum_{i=1}^nx_i =k.$ Consider the linear map $x \mapsto z= Vx$ from $\mathbb{R}^n$ to $\mathbb{R}^r,$ and let $Z$ be the image of $K$ under this map. Since $K$ is convex, $Z$ is also a convex polytope. For every extreme point $\bar{z}$ of $Z,$ there exists an extreme point $\bar{x}$ of $K$ such that $\bar{z} = V \bar{x}.$ Denote by $S$ the set of extreme points of the polytope $Z.$ The optimal value $f^*$ of \eqref{eq:fixed_rank} then satisfies
    \begin{align*}
        f^* =  &\max \left\{ \sum_{i=1}^{r} \langle v_i, x \rangle^2  \middle|\ \sum_{i=1}^nx_i =k,  x \in \{0,1\}^n \right\} \\
        \stackrel{(1)}{=}&\max_{x \in K} \left\{ \sum_{i=1}^{r} \langle v_i, x \rangle^2 \right\} = \max_{z \in Z} \left\{ \sum_{i=1}^{r} z_i^2 \right\} = \max_{z \in S} \left\{ \sum_{i=1}^{r} z_i^2 \right\} ,
    \end{align*}
    where the equality $\stackrel{(1)}{=}$ follows from the convexity of the function $x \mapsto \sum_{i=1}^r \langle v_i,x\rangle^2 $. The problem is thus reduced to enumerating the extreme points of the polytope $Z$ in $\mathbb{R}^r.$ 

    The polytope $Z$ is a subpolytope of the \emph{zonotope}, which is the image of $[0,1]^n$ under the linear map $x \mapsto z=Vx$, as mentioned in \cite[Section 2]{allemand2001polynomial}. The extreme points of the polytope $Z$ form a subset of the extreme points of the zonotope.  The complexity of calculating the extreme points of the zonotope (see \cite[Section 2]{allemand2001polynomial}) yields that the worst-case time complexity for FCBQPs in this case is $\mathcal{O}(n^{r-1})$ for $r \geq 3,$ and  $\mathcal{O}(n^r)$ for $r \leq 2$.  
\end{proof}
 \section{Conclusion}
 We identified several structural conditions on the cost matrix where FCBQPs can be solved in polynomial time. In certain cases, we even showed that the fixed cardinality constraint improves tractability. Moreover, from a practical point of view, checking whether a matrix belongs to one of these categories (i.e., a sum matrix, block matrix, Toeplitz matrix, distance matrix of a one-dimensional point set, symmetric rank-one matrix, or symmetric negative semidefinite matrix of a fixed rank) can be done in polynomial time. Still, we note that there are also cases where the imposition of a cardinality constraint destroys tractability. For example, while QUBOs \eqref{eq:QUBO} are solvable in polynomial time when the associated Max-Cut graph is planar \cite{hadlock1975} or when the diagonal elements of the cost matrix are negative, and the rank is
fixed (see \cite[Theorem 3.21]{punnen2022quadratic}), this polynomial-time solvability is unlikely to be preserved once a fixed cardinality constraint is imposed.
 
\begin{acknowledgement}
The work is supported by the German Academic Exchange Service (DAAD).  We thank the
two anonymous referees for their constructive comments, which helped improve the
manuscript.
\end{acknowledgement} 

%
%
%

\end{document}